\DeclareMathOperator{\rank}{rank}
\DeclareMathOperator{\dif}{d}
\newcommand{\Cal}{\mathcal{C}}
\def \phi{\varphi}
\def \Phi{\varPhi}
\def \p{\pi}
\def \s{\sigma}
\def \t{\tau}
\def \R{\mathbb{R}}
\def \Hq{\mathbb{H}\,}
\def \C{\mathbb{C}\,}
\def\widecheckg{g^{\hspace*{-2.5pt}\vbox to 5pt{\hbox to
0pt{\LARGE$\check{}$}}}\hspace*{2pt}}
\def\widecheckl{\lambda^{\hspace*{-3.5pt}\vbox to 8pt{\hbox to
0pt{\LARGE$\check{}$}}}\hspace*{2pt}}
\begin{document}

\title{A note on CR quaternionic maps}
\author{S.~Marchiafava and R.~Pantilie}
\thanks{S.M.\ acknowledges that this work was done under the program of GNSAGA-INDAM of C.N.R. and PRIN07 ``Geometria Riemanniana e strutture
differenziabili'' of MIUR (Italy).}
\email{\href{mailto:marchiaf@mat.uniroma1.it}{marchiaf@mat.uniroma1.it},
       \href{mailto:radu.pantilie@imar.ro}{radu.pantilie@imar.ro}}
\address{S.~Marchiafava, Dipartimento di Matematica, Istituto ``Guido~Castelnuovo'',
Universit\`a degli Studi di Roma ``La Sapienza'', Piazzale Aldo~Moro, 2 - I 00185 Roma - Italia}
\address{R.~Pantilie, Institutul de Matematic\u a ``Simion~Stoilow'' al Academiei Rom\^ane,
C.P. 1-764, 014700, Bucure\c sti, Rom\^ania}
\subjclass[2010]{Primary 53C26, Secondary 53C28}

\newtheorem{thm}{Theorem}[section]
\newtheorem{lem}[thm]{Lemma}
\newtheorem{cor}[thm]{Corollary}
\newtheorem{prop}[thm]{Proposition}

\theoremstyle{definition}

\newtheorem{defn}[thm]{Definition}
\newtheorem{rem}[thm]{Remark}
\newtheorem{exm}[thm]{Example}

\numberwithin{equation}{section}

\maketitle
\thispagestyle{empty}
\vspace{-10mm}
\section*{Abstract}
\begin{quote}
{\footnotesize
We introduce the notion of \emph{CR quaternionic map} and we prove that any such real-analytic map,
between CR quaternionic manifolds, is the restriction of a quaternionic map between quaternionic manifolds.
As an application, we prove, for example, that for any submanifold $M$, of dimension $4k-1$\,,
of a quaternionic manifold $N$, such that $TM$ generates a quaternionic subbundle of $TN|_M$,
of (real) rank $4k$\,, there exists, locally, a quaternionic submanifold of $N$, containing $M$ as a hypersurface.}
\end{quote}

\section*{Introduction}

\indent
An \emph{almost quaternionic manifold} is a manifold $M$ whose tangent bundle is a quaternionic vector bundle
(see, for example, \cite{AleMar-Annali96}\,, \cite{IMOP}\,, and, also, Section \ref{section:cr_q_mfds}\,, below).
Further, any compatible connection on $M$ induces an almost complex structure on the bundle $Z$ of admissible linear complex structures,
whose integrability characterises the integrability of the almost quaternionic structure of $M$ (that is, if $\dim M\geq 8$\,, 
the existence of a torsion free compatible connection; see \cite[Remark 2.10(2)]{IMOP}\,);
then $Z$ is the \emph{twistor space} of $M$.\\
\indent
Accordingly, a \emph{quaternionic map} \cite{IMOP} between quaternionic manifolds is, essentially, a map admitting a holomorphic lift
between the corresponding twistor spaces. This generalizes the well-known notion of quaternionic submanifold
(necessarily, totally geodesic with respect to any compatible torsion free connection \cite{AleMar-Report93}\,; see \cite{PePoSw-98}\,).
However, not all submanifolds of a quaternionic manifold are quaternionic (take, for example, any hypersurface).
Nevertheless, the generic submanifold, of codimension at most $2k-1$\,, of a quaternionic manifold, of dimension $4k$\,,
inherits a \emph{CR quaternionic structure}; moreover, any real-analytic CR quaternionic structure
is obtained this way through a germ unique embedding into a quaternionic manifold, which is called its \emph{heaven space}
\cite[Corollary 5.4]{fq}\,.\\
\indent 
In this note, we strengthen this result by proving that any real-analytic \emph{CR quaternionic map} (that is, any real-analytic map
admitting a CR lift between the corresponding twistor spaces) is the restriction of a quaternionic map between quaternionic
manifolds (Theorem \ref{thm:cr_q_maps}\,).
This shows that the CR quaternionic maps are the natural morphisms of the CR Quaternionic Geometry.\\
\indent 
We, also, apply this result to the study of (almost) CR quaternionic submanifolds, thus, obtaining Corollaries \ref{cor:cr_q_submfd_real_analytic} 
and \ref{cor:cr_q_submfd_codim}\,. By the latter, for any submanifold $M$, of dimension $4k-1$\,,
of a quaternionic manifold $N$, such that $TM$ generates a quaternionic subbundle of $TN|_M$,
of (real) rank $4k$\,, there exists, locally, a quaternionic submanifold of $N$, containing $M$ as a hypersurface.

\section{Brief review of CR quaternionic linear maps} \label{section:cr_q_lin_maps}

\indent
Recall (see \cite{IMOP}\,, \cite{fq} and the references therein) that a \emph{linear complex structure} on a (real) vector space $U$
is a linear map $J:U\to U$ such that $J^2=-{\rm Id}_U$\,. Then the $-{\rm i}$ eigenspace $C$ of $J$
satisfies $C\oplus\overline{C}=U^{\C}$; moreover, any such complex vector subspace of the complexification $U^{\C}$
is the $-{\rm i}$ eigenspace of a (unique) linear complex structure on $U$.\\
\indent
More generally, a \emph{linear CR structure} on a vector space $U$ is a complex vector subspace $C\subseteq U^{\C}$ such that
$C\cap\overline{C}=\{0\}$\,. A map $t:(U,C)\to (U',C')$ between CR vector spaces is a \emph{CR linear map} if it is linear and
$t(C)\subseteq C'$ (see \cite{fq} and \cite{fq_2} for these and, also, for the dual notion of \emph{linear co-CR structure}).\\
\indent
There are other ways to describe a linear CR structure. For example, if $C$ is a linear CR structure on $U$ and we denote
$E=U^{\C\!}/C$, and by $\iota:U\to E$ the composition of the inclusion $U\to U^{\C}$ followed by the projection $U^{\C\!}\to E$
then\\
\indent
\quad(a) $\iota$ is injective,\\
\indent
\quad(b) ${\rm im}\,\iota+J({\rm im}\,\iota)=E$\,,\\
\indent
\quad(c) $C=\iota^{-1}\bigl({\rm ker}(J+{\rm i})\bigr)$\,,\\
where $J$ is the linear complex structure of $E$. Moreover, the pair $(E,\iota)$ is unique, up to complex linear isomorphisms,
with these properties.\\
\indent
Thus, we could define a linear CR structure on a vector space $U$ as a pair $(E,\iota)$\,, where $E$ is a complex vector space
and $\iota:U\to E$ is a linear map satisfying properties (a) and (b)\,, above.\\
\indent
Furthermore, in this setting, the CR linear maps $t:(U,E,\iota)\to(U',E',\iota')$ are characterised by the fact that there exists
a (necessarily unique, if $t\neq0$) complex linear map $\widetilde{t}:E\to E'$ such that $\iota'\circ t=\widetilde{t}\circ\iota$\,.

\begin{lem} \label{lem:cr_subspace}
$t$ is injective if and only if\/ $\widetilde{t}$ is injective.
\end{lem}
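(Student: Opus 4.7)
The plan is to prove each implication by exploiting the characterising identity $\iota'\circ t=\widetilde{t}\circ\iota$, switching between the two descriptions of linear CR structures as convenient. The direction ``$\widetilde{t}$ injective $\Longrightarrow$ $t$ injective'' is essentially a one-line diagram chase: if $t(u)=0$, then $\widetilde{t}(\iota(u))=\iota'(t(u))=0$, so $\iota(u)=0$ by injectivity of $\widetilde{t}$, and then $u=0$ by property (a) of the source CR structure.

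For the converse I would revert to the first description, where $E=U^{\C}/C$, $E'=(U')^{\C}/C'$, and $\widetilde{t}$ is induced on quotients by the complex-linear extension $t^{\C}\colon U^{\C}\to (U')^{\C}$. A class $[x]\in\ker\widetilde{t}$ is represented by some $x\in U^{\C}$ with $t^{\C}(x)\in C'$, so the claim reduces to $(t^{\C})^{-1}(C')\subseteq C$. Setting $C^{\ast}:=(t^{\C})^{-1}(C')$, the CR-linearity of $t$ gives the inclusion $C\subseteq C^{\ast}$, while the injectivity of $t^{\C}$ (forced by that of $t$) yields
\[
C^{\ast}\cap\overline{C^{\ast}}=(t^{\C})^{-1}\bigl(C'\cap\overline{C'}\bigr)=0,
\]
so that $C^{\ast}$ is itself a linear CR structure on $U$ sitting above $C$.

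The key step, which I expect to be the main obstacle, is to rule out the possibility of strict containment $C\subsetneq C^{\ast}$; once the equality $C^{\ast}=C$ is established, $\ker\widetilde{t}=C^{\ast}/C=0$ and the converse follows at once. I would treat this last step by a dimension count using the maximality conventions intrinsic to the linear CR structures that appear here, so that no strict enlargement of $C$ can remain a valid CR structure compatible with $t^{\C}$.
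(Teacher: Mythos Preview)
Your argument has a genuine gap at the final step. There is no ``maximality convention'' built into the definition of a linear CR structure: any complex subspace $C\subseteq U^{\mathbb{C}}$ with $C\cap\overline{C}=\{0\}$ qualifies, so nothing forbids a strict inclusion $C\subsetneq C^{\ast}$. Concretely, take $U=U'=\mathbb{R}^{2}$, let $C=\{0\}$ be the trivial CR structure on $U$, let $C'$ be the $-\mathrm{i}$--eigenspace of a linear complex structure on $U'$, and let $t$ be the identity. Then $t$ is an injective CR linear map, $E=U^{\mathbb{C}}\cong\mathbb{C}^{2}$, $E'\cong\mathbb{C}$, and $\widetilde{t}\colon\mathbb{C}^{2}\to\mathbb{C}$ is (up to a scalar) the rank-one map $(z,w)\mapsto z+\mathrm{i}w$; here $C^{\ast}=(t^{\mathbb{C}})^{-1}(C')$ is one-dimensional, strictly larger than $C=\{0\}$. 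Thus the converse implication, read at the level of arbitrary linear CR structures, actually admits counterexamples, and no dimension count intrinsic to a single CR structure can close your gap.

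The paper's argument is organised differently but meets the same obstruction: after reducing to the case $U'=E'$, it lets $F$ be the complex span of $t(U)$ in $E'$ and asserts that $\widetilde{t}$ is a complex linear isomorphism of $E$ onto $F$. Surjectivity onto $F$ is immediate (since $\iota(U)$ spans $E$ over $\mathbb{C}$), but the injectivity is not justified and fails in the example above, where $F=E'=\mathbb{C}$ while $\widetilde{t}$ has a one-dimensional kernel. The lemma is only invoked later in the CR \emph{quaternionic} context (Corollary~\ref{cor:cr_q_submfd_real_analytic}), where $\ker\widetilde{t}$ is a \emph{quaternionic} subspace of $E$ meeting $\iota(U)$ trivially; it is that extra structure, not any general CR maximality, that one should exploit to force $\ker\widetilde{t}=0$.
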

\begin{proof}
The fact that $\widetilde{t}$ injective implies $t$ injective is obvious.
For the converse, it is sufficient to consider the case $U'=E'$.\\
\indent
Let $F$ be the complex vector subspace of $E'$ spanned by the image of $t$.
Then $\widetilde{t}$ decomposes into a complex linear isomorphism from $E$ onto $F$ followed by the
inclusion of $F$ into $E'$.
\end{proof}

\indent
Let $\Hq$ be the division algebra of quaternions, and note that its automorphism group is ${\rm SO}(3)$ acting trivially
on $\R$ and canonically on ${\rm Im}\Hq(=\R^3)$\,.\\
\indent
A \emph{linear quaternionic structure} on a vector space $U$ is an equivalence class of morphisms of associative algebras
from $\Hq\!$ to ${\rm End}(U)$\,, where two such morphisms $\s$ and $\t$ are equivalent if $\t=\s\circ a$\,, for some $a\in{\rm SO}(3)$\,.\\
\indent
Note that, if $\s:\Hq\!\to{\rm End}(U)$ defines a linear quaternionic structure on $U$ then the space
of \emph{admissible linear complex structures} $Z=\s(S^2)$ is well-defined \cite{AleMar-Annali96}\,.\\
\indent
Let $U$ and $U'$ be quaternionic vector spaces and let $Z$ and $Z'$ be the corresponding spaces of admissible linear complex structures,
respectively. A \emph{quaternionic linear map} from $U$ to $U'$ is a linear map $t:U\to U'$ for which there exists a function
$T:Z\to Z'$ such that $t\circ J=T(J)\circ t$\,, for any $J\in Z$\,; then, if $t\neq0$\,, we have that $T$ is unique and an
orientation preserving isometry \cite{IMOP}\,.

\begin{defn}[\,\cite{fq}\,]
1) A \emph{linear CR quaternionic structure} on a vector space $U$ is a pair $(E,\iota)$\,, where $E$ is a quaternionic vector space
and $\iota:U\to E$ is an injective linear map such that ${\rm im}\,\iota+J({\rm im}\,\iota)=E$\,, for any admissible linear
complex structure $J$ on $E$\,.\\
\indent
2) A \emph{CR quaternionic linear map} $t:(U,E,\iota)\to(U',E',\iota')$ between CR quaternionic vector spaces is a linear
map $t:U\to U'$ for which there exists a quaternionic linear map $\widetilde{t}:E\to E'$ such that $\iota'\circ t=\widetilde{t}\circ\iota$\,.
\end{defn}

\indent
Any quaternionic vector space of (real) dimension $4k$ is (non-canonically) isomorphic to $\Hq^{\!k}$.
More generally (and much less trivial), any CR quaternionic vector spaces is isomorphic to a finite product,
unique up to the order of factors, in which each factor is contained by one of the following two classes \cite[Corollary 3.7]{fq}\,.

\begin{exm}
For $k\geq1$\,, let $V_k$ be the vector subspace of $\Hq^{\!k}$ formed of all vectors of the form
$(z_1,\overline{z_1}+z_2\,{\rm j},z_3-\overline{z_2}\,{\rm j},\ldots)\,,$
where $z_1,\ldots,z_k$ are complex numbers with $\overline{z_k}=(-1)^kz_k$\,.\\
\indent
Then $(U_k,\Hq^{\!k})$ is a CR quaternionic vector space, where $U_k=V_k^{\perp}$ and we have used the canonical
Euclidean structure of $\Hq^{\!k}$.
\end{exm}

\begin{exm}
Let $V'_0=\{0\}(\subseteq\Hq\!)$ and, for $k\geq1$\,, let $V'_k$ be the vector subspace of $\Hq^{\!2k+1}$ formed of all vectors of the form
$(z_1,\overline{z_1}+z_2\,{\rm j},z_3-\overline{z_2}\,{\rm j},\ldots,\overline{z_{2k-1}}+z_{2k}\,{\rm j},-\overline{z_{2k}}\,{\rm j}\,)\,,$
where $z_1,\ldots,z_{2k}$ are complex numbers.\\
\indent
Then $(U'_k,\Hq^{\!2k+1})$ is a CR quaternionic vector space, where $U'_k=(V'_k)^{\perp}$.
\end{exm}

\indent
It follows that if $E$ is a quaternionic vector space, $\dim E=4k$\,, and $U\subseteq E$ is a generic vector subspace,
of codimension at most $2k-1$\,, then $(U,E)$ is a CR quaternionic vector space \cite{fq}\,.

\section{CR quaternionic manifolds} \label{section:cr_q_mfds}

\indent
An \emph{almost CR structure} on a manifold $M$ is a complex vector subbundle $\Cal$ of $T^{\C\!}M$ such that $\Cal\cap\overline{\Cal}=0$\,.
An \emph{(integrable almost) CR structure} is an almost CR structure whose space of sections is closed under the usual bracket.\\
\indent
If $M$ is a (real) hypersurface in a complex manifold $(N,J)$ then $T^{\C\!}M\cap{\rm ker}(J+{\rm i})$
is a CR structure on $M$. This admits a straightforward generalization to higher codimensions. Furthermore, any real-analytic
CR structure is obtained this way through a germ unique embedding into a complex manifold \cite{AndFre}\,.\\
\indent
Let $\mathfrak{a}$ be a (finite-dimensional) associative algebra. A \emph{bundle of associative algebras, with typical fibre $\mathfrak{a}$\,,}
is a vector bundle $A$, with typical fibre $\mathfrak{a}$\,, whose structural group is the automorphism group of $\mathfrak{a}$\,; in  particular,
each fibre of $A$ is an associative algebra (non-canonically) isomorphic to $\mathfrak{a}$\,. A \emph{morphism of bundles of associative algebras}
is a vector bundle morphism whose restriction to each fibre is a morphism of associative algebras.\\
\indent
A \emph{quaternionic vector bundle} is a vector bundle $E$ endowed with a pair $(A,\s)$\,, where $A$ is a bundle of associative algebras,
with typical fibre $\Hq$, and $\s:A\to{\rm End}(E)$ is a morphism of bundles of associative algebras.
Then the bundle $Z$ of \emph{admissible linear complex structures} on $E$ is the sphere bundle of $\s({\rm Im}\,A)$\,;
in particular, the typical fibre of $Z$ is the Riemann sphere.\\
\indent
An \emph{almost CR quaternionic structure} \cite{fq} on a manifold $M$ is a pair $(E,\iota)$\,, where $E$ is a quaternionic vector bundle over $M$
and $\iota:TM\to E$ is an injective vector bundle morphism such that $(E_x,\iota_x)$ is a linear CR quaternionic structure on $T_xM$,
for any $x\in M$ (see \cite{fq_2} for the dual notion of \emph{almost co-CR quaternionic structure}).\\
\indent
Let $(E,\iota)$ be an almost quaternionic structure on $M$ and let $\nabla$ be a compatible connection on $E$. Then $\nabla$ induces
a connection on the bundle $Z$ of admissible linear complex structures on $E$. Denote by $\mathcal{B}$ the
complex vector subbundle of $T^{\C\!}Z$ whose fibre, at each $J\in Z$, is the horizontal lift of
$\iota_{\p(J)}^{-1}\bigl({\rm ker}(J+{\rm i})\bigr)$\,, where $\p:Z\to M$ is the projection.
Then $\Cal=\mathcal{B}\oplus({\rm ker}\dif\!\p)^{0,1}$ is an almost CR structure on $Z$\,.\\
\indent
If $\Cal$ is integrable then $(E,\iota,\nabla)$ is a \emph{CR quaternionic structure}, $(M,E,\iota,\nabla)$ is a \emph{CR quaternionic manifold}
and $(Z,\Cal)$ is its \emph{twistor space}. Also, $(M,Z,\p,\Cal)$ is the \emph{twistorial structure} \cite{LouPan-II} of $(M,E,\iota,\nabla)$\,.\\ 
\indent 
Note that, if $\iota$ is a vector bundle isomorphism then we retrieve the usual notion of quaternionic structure/manifold
(see \cite[Remark 2.10(2)]{IMOP}\,).\\
\indent
Let $M$ be a hypersurface in a quaternionic manifold $N$. Then $(TN|_M,\iota,\nabla|_M)$ is a CR quaternionic structure on $M$,
where $\iota:TM\to TN|_M$ is the inclusion and $\nabla$ is any quaternionic connection on $N$ (that is, $\nabla$ is a torsion free
compatible connection on $N$). Further, this admits a straightforward generalization
to higher codimensions. Moreover, any real-analytic CR quaternionic structure is obtained this way through a germ unique embedding
into a quaternionic manifold, which is called its \emph{heaven space} \cite[Corollary 5.4]{fq}\,.
\section{CR quaternionic maps} \label{section:cr_q_maps}

\indent
Let $(M,E_M,\iota_M)$ and $(N,E_N,\iota_N)$ be almost CR quaternionic manifolds and let $\p_M:Z_M\to M$ and $\p_N:Z_N\to N$
be the corresponding bundles of admissible linear complex structures, respectively.
Also, let $\phi:M\to N$ and $\Phi:Z_M\to Z_N$ be maps.\\
\indent
We say that $\phi:(M,E_M,\iota_M)\to(N,E_N,\iota_N)$ is an \emph{almost CR quaternionic map, with respect to $\Phi$}, if
$\dif\!\phi_x$ is CR quaternionic, with respect to $\Phi_x$\,, at each point $x\in M$.\\
\indent
Suppose that $E_M$ and $E_N$ are endowed with compatible connections $\nabla^M$ and $\nabla^N$, respectively, with respect
to which $M$ and $N$ become CR quaternionic manifolds; denote by $\Cal^M$ and $\Cal^N$ the corresponding CR structures on $Z_M$ and
$Z_N$\,, respectively.\\
\indent
The map $\phi:(M,E_M,\iota_M)\to(N,E_N,\iota_N)$ is \emph{CR quaternionic, with respect to $\Phi$}, if
$\p_N\circ\Phi=\phi\circ\p_M$ and $\Phi:(Z_M,\Cal^M)\to(Z_N,\Cal^N)$ is a CR map.\\
\indent
The notions of \emph{(almost) CR quaternionic immersion/submersion/diffeomorphism} are defined, accordingly.\\
\indent
Note that, the CR quaternionic maps are just twistorial maps (see \cite{LouPan-II} for the definition of the latter).\\
\indent
Also, for maps of rank at least one, between quaternionic manifolds, the two notions coincide \cite[Theorem 3.5]{IMOP}\,.
That is why we call \emph{quaternionic maps} the (almost) CR quaternionic maps, between (almost) quaternionic manifolds.\\
\indent
Furthermore, any CR quaternionic map is, obviously, almost CR quaternionic. However, the converse does not hold,
as the following example shows.

\begin{exm} \label{exm:3d}
Let $(M,c)$ be a three-dimensional conformal manifold and let $L=(\Lambda^3TM)^{1/3}$ be the line bundle of $M$.
Then $E=L\oplus TM$ is oriented and $c$ induces on it a linear conformal structure. Therefore $E$ is a quaternionic vector
bundle and $(M,E,\iota)$ is an almost CR quaternionic manifold, where $\iota:TM\to E$ is the inclusion.\\
\indent
Furthermore, if $D$ is a conformal connection on $(M,c)$ and $\nabla=D^L\oplus D$, where $D^L$ is the connection induced by $D$ on $L$,
then $(M,E,\iota,\nabla)$ is a CR quaternionic manifold (this is a straightforward consequence of \cite[Theorem 4.6]{fq}\,).\\
\indent
Let $D'$ be another conformal connection on $(M,c)$ and let $\nabla'=(D')^L\oplus D'$, where $(D')^L$ is the connection induced by $D'$ on $L$.\\
\indent
Then ${\rm Id}_M:(M,E,\iota,\nabla)\to(M,E,\iota,\nabla')$ is a CR quaternionic map if and only if
the trace-free self-adjoint part of $*T$ is zero, where $*$ is the Hodge star-operator of $(M,c)$ and
$T$ is the difference between the torsion tensors of $D$ and $D'$.\\
\indent
As any conformal connection is determined by its torsion and the connection it induces on the line bundle of the manifold,
we can therefore easily construct many almost CR quaternionic maps which are not CR quaternionic.\\
\indent
A straightforward generalization shows that the same applies to any almost CR quaternionic manifold $(M,E,\iota)$
with $\dim M=2k+1$ and $\rank E=4k$\,, for some nonzero natural number $k$ (this follows from the fact that then
for any compatible connection $\nabla$ on $E$ we have that $(M,E,\iota,\nabla)$ is a CR quaternionic manifold).
\end{exm}

\begin{thm} \label{thm:cr_q_maps}
Let $M$ and $N$ be real-analytic CR quaternionic manifolds and let $\widetilde{M}$ and $\widetilde{N}$ be the corresponding
heaven spaces, respectively.\\
\indent
If $\phi:M\to N$ is a real-analytic map whose differential is nowhere zero then the following assertions are equivalent.\\
\indent
\quad{\rm (i)} $\phi$ is CR quaternionic (with respect to some real-analytic lift between the twistor spaces of
$M$ and $N$).\\
\indent
\quad{\rm (ii)} $\phi$ is the restriction of a germ unique quaternionic map $\widetilde{\phi}$ defined on some
open neighbourhood of $M$ in $\widetilde{M}$ and with values in $\widetilde{N}$. 
\end{thm}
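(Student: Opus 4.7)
The plan is to use the real-analytic CR-to-holomorphic extension theorem of \cite{AndFre} to lift $\Phi$ to a holomorphic map between the (complex) twistor spaces of the heaven spaces, and then to descend it to a quaternionic map by exploiting the rigidity of twistor lines together with the antiholomorphic involutions on the twistor spaces.

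The implication $\mathrm{(ii)}\Rightarrow\mathrm{(i)}$ is immediate: the twistor lift of a quaternionic extension $\widetilde{\phi}$ is a holomorphic map $\widetilde{\Phi}\colon\widetilde{Z}_M\to\widetilde{Z}_N$ between the complex twistor spaces of the heaven spaces, and its restriction to the real-analytic CR submanifold $Z_M\subseteq\widetilde{Z}_M$ is a CR lift of $\phi$ with values in $Z_N\subseteq\widetilde{Z}_N$.

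For the substantive direction $\mathrm{(i)}\Rightarrow\mathrm{(ii)}$, I would start from a real-analytic CR lift $\Phi\colon Z_M\to Z_N$ of $\phi$. Since $Z_M$ and $Z_N$ sit as generic real-analytic CR submanifolds of the complex manifolds $\widetilde{Z}_M$ and $\widetilde{Z}_N$, applying \cite{AndFre} to the graph of $\Phi$ produces a germ-unique holomorphic extension $\widetilde{\Phi}\colon\widetilde{Z}_M\to\widetilde{Z}_N$ on an open neighbourhood of $Z_M$. By germ-uniqueness, $\widetilde{\Phi}$ intertwines the antiholomorphic involutions $\tau_M,\tau_N$ of the two twistor spaces (the identity $\tau_N\circ\widetilde{\Phi}\circ\tau_M=\widetilde{\Phi}$ holds along $Z_M$ because $\Phi$ is real, hence everywhere by uniqueness). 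To produce $\widetilde{\phi}$, it then suffices to show that $\widetilde{\Phi}$ sends each twistor fibre $\ell_y:=\widetilde{\p}_M^{-1}(y)$ into a twistor fibre of $\widetilde{\p}_N$ for $y$ in a neighbourhood of $M$. For $y\in M$, $\ell_y\subseteq Z_M$, and on $Z_M$ the composition $\widetilde{\p}_N\circ\widetilde{\Phi}$ equals $\phi\circ\p_M$, which is constantly $\phi(y)$ along $\ell_y$; hence $\widetilde{\Phi}(\ell_y)\subseteq\widetilde{\p}_N^{-1}(\phi(y))$. For $y$ near $M$, $\widetilde{\Phi}(\ell_y)$ is a $\tau_N$-invariant connected compact complex analytic subset of $\widetilde{Z}_N$ deforming the preceding situation; by the rigidity of twistor lines (they form an open subset of the cycle space of rational curves in $\widetilde{Z}_N$, characterised by normal bundle $\ol(1)^{\oplus 2k}$, with $\tau_N$-invariant ones corresponding precisely to points of $\widetilde{N}$), $\widetilde{\Phi}(\ell_y)$ is contained in a unique fibre $\widetilde{\p}_N^{-1}(\widetilde{\phi}(y))$. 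This defines the desired real-analytic extension $\widetilde{\phi}$ of $\phi$ on a neighbourhood of $M$; by construction $\widetilde{\Phi}$ is its twistor lift, so $\widetilde{\phi}$ is quaternionic, and germ-uniqueness of $\widetilde{\phi}$ follows from that of $\widetilde{\Phi}$.

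The main obstacle I anticipate is the last propagation step. Because $\widetilde{\p}_N\circ\widetilde{\Phi}$ is only real-analytic and not holomorphic, the vanishing of its vertical differential along the proper submanifold $Z_M\subset\widetilde{Z}_M$ does not extend to a neighbourhood by bare analytic continuation; one genuinely has to combine the $\tau$-equivariance of $\widetilde{\Phi}$ with Kodaira-type rigidity of twistor lines, so as to force each nearby image $\widetilde{\Phi}(\ell_y)$ to again be a $\tau_N$-invariant twistor line, i.e., a fibre over a point of $\widetilde{N}$.
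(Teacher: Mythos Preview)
Your proposal is correct and follows essentially the same route as the paper: holomorphically extend $\Phi$ via \cite{AndFre}, use germ-uniqueness to get $\tau$-equivariance, and then invoke the local completeness of the family of twistor lines to descend to $\widetilde{\phi}$. The paper resolves exactly the obstacle you flag by first noting that, because $\dif\!\phi$ is nowhere zero, $\Phi$ is an orientation-preserving isometry on each fibre of $\p_M$, so after shrinking the domain $\widetilde{\Phi}$ carries every twistor line biholomorphically onto a projective line; local completeness then forces these images to be twistor lines, and $\tau$-equivariance sends the real ones to real ones, i.e., fibres of $\p_{\widetilde{N}}$.
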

\begin{proof}
Let $Z_{\widetilde{M}}$ and $Z_{\widetilde{N}}$ be the twistor spaces of $\widetilde{M}$ and $\widetilde{N}$, respectively.
If we denote by $Z_M$ and $Z_N$ their restrictions to $M$ and $N$ we obtain (generic) CR submanifolds
which are the twistor spaces of $M$ and $N$, respectively. Denote by $\p_{\widetilde{M}}:Z_{\widetilde{M}}\to\widetilde{M}$
and $\p_{\widetilde{N}}:Z_{\widetilde{N}}\to\widetilde{N}$ the projections and by $\p_M$ and $\p_N$ their
restrictions to $Z_M$ and $Z_N$\,, respectively.\\
\indent
Suppose that (i) holds and let $\Phi:Z_M\to Z_N$ be a real-analytic CR map with respect to which $\phi$ is CR quaternionic.
Note that, locally, any real-analytic CR function on $Z_M$ is the restriction of a unique
holomorphic function on $Z_{\widetilde{M}}$\,. Thus, if $f$ is a holomorphic function, locally defined on $Z_{\widetilde{N}}$\,,
then $f\circ\Phi$ is the restriction of some holomorphic function, locally defined on $Z_{\widetilde{N}}$\,. Consequently,
$\Phi$ is the restriction of some germ unique holomorphic map $\widetilde{\Phi}$ defined on some open neighbourhood $Z$
of $Z_M$ in $Z_{\widetilde{M}}$\,.\\
\indent
As $\Phi$ restricted to each fibre of $\p_M$ is a holomorphic diffeomorphism (in fact, an orientation preserving isometry),
by passing if necessary to an open subset of $Z$, we may suppose that $\widetilde{\Phi}$ restricted to each twistor line
is a holomorphic diffeomorphism. Then $\widetilde{\Phi}$ maps the family of twistor lines contained by $Z$
into a family of complex projective lines which, obviously, contains the fibres of $\p_N$\,.
As the families of twistor lines (on the twistor spaces of quaternionic manifolds) are locally complete,
we obtain that $\widetilde{\Phi}$ maps twistor lines to twistor lines.\\
\indent
Let $\t_{\widetilde{M}}$ and $\t_{\widetilde{N}}$ be the conjugations on $Z_{\widetilde{M}}$ and $Z_{\widetilde{N}}$\,, respectively,
which on the fibres of the projections $\p_{\widetilde{M}}$ and $\p_{\widetilde{N}}$ are given by the antipodal map.
Obviously, $\t_{\widetilde{N}}\circ\widetilde{\Phi}\circ\t_{\widetilde{M}}:\t_{\widetilde{M}}(Z)\to Z_{\widetilde{N}}$
is, also, a holomorphic extension of $\Phi$. Hence, $\t_{\widetilde{N}}\circ\widetilde{\Phi}={\Phi}\circ\t_{\widetilde{M}}$
on $Z\cap\tau_{\widetilde{M}}(Z)$ (which, obviously, contains $Z_M$\,).\\
\indent
Therefore $\widetilde{\Phi}$ maps real twistor lines to real twistor lines. But the real twistor lines are just
the fibres of $\p_{\widetilde{M}}$ and $\p_{\widetilde{N}}$\,. Hence, $\widetilde{\Phi}$ descends to a quaternionic
map, as required.\\
\indent
Suppose, now, that (ii) holds and let $\widetilde{\phi}:\widetilde{M}\to\widetilde{N}$ be a quaternionic extension of~$\phi$\,.
Then by \cite[Theorem 3.5]{IMOP} we have that $\widetilde{\phi}$ admits a holomorphic lift
$\widetilde{\Phi}:Z_{\widetilde{M}}\to Z_{\widetilde{N}}$ which, obviously, restricts to a CR map $\Phi:Z_M\to Z_N$
which is a lift of $\phi$\,. Thus, $\phi$ is CR quaternionic. 
\end{proof}

\indent 
For applications we shall, also, need the following:  

\begin{lem} \label{lem:cr_q_submfd}
Let $M$ be an almost CR quaternionic submanifold of a CR quaternionic manifold $N$; denote by $E$ the corresponding 
quaternionic vector bundle on $M$.\\ 
\indent 
Then there exists a connection on $E$ with respect to which $M$ is a CR quaternionic submanifold of $N$. 
\end{lem}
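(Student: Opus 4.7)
Since $M$ is an almost CR quaternionic submanifold of $N$, the induced fibre-wise quaternionic linear map $\widetilde{\iota}:E\to E_N|_M$ is injective (by Lemma~\ref{lem:cr_subspace}), and the plan is to start by using it to identify $E$ with a quaternionic subbundle of $E_N|_M$. In particular, the bundle of associative algebras attached to $E$ is identified with $A_N|_M$, and the bundles of admissible linear complex structures satisfy $Z_M=Z_N|_M$ (as smooth submanifolds of $Z_N$, with $Z_M=\p_N^{-1}(M)$).

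The connection $\nabla$ would then be obtained by orthogonally projecting $\nabla^N$. First, equip $E_N$ with a compatible Euclidean metric, which exists by a partition of unity argument since the compatibility condition is preserved under convex combinations. Because admissible linear complex structures are isometries that preserve $E$, they also preserve the orthogonal complement $E^{\perp}$; hence $E^{\perp}$ is a quaternionic subbundle of $E_N|_M$, and the orthogonal projection $p:E_N|_M\to E$ commutes with the action of $A_N|_M$. Set $\nabla_X s:=p(\nabla^N_X s)$ for $s\in\Gamma(E)$ and $X\in TM$. A short computation, relying on the commutation of $p$ with the algebra action, then shows that $\nabla$, paired with the algebra connection $\nabla^{A_N}|_M$, is compatible with the quaternionic structure on $E$.

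The central task is to show that the CR structure $\Cal^M$ on $Z_M$ defined by $\nabla$ coincides with the trace $\Cal^N\cap T^{\mathbb{C}}Z_M$; this will simultaneously yield integrability of $\Cal^M$ and make the inclusion $Z_M\hookrightarrow Z_N$ a CR lift of $M\hookrightarrow N$, so that $M$ becomes a CR quaternionic submanifold, as required. The key observation is that, under the identification $A=A_N|_M$, the algebra connection associated with $\nabla$ is exactly $\nabla^{A_N}|_M$; hence a section of $Z_M$ along any curve $\gamma\subseteq M$ is $\nabla$-parallel if and only if, viewed in $Z_N|_{\gamma}$, it is $\nabla^N$-parallel. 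Consequently the $\nabla$- and $\nabla^N$-horizontal lifts of vectors in $T_xM\subseteq T_xN$ at points of $Z_M$ agree as vectors in $TZ_N$, and combined with the tautological equality of the vertical $(0,1)$-spaces this gives $\Cal^M=\Cal^N\cap T^{\mathbb{C}}Z_M$. The main obstacle will be this matching of parallel transports, which ultimately rests on the commutation of $p$ with the $A_N|_M$-action and thus on the compatibility of the chosen Euclidean metric; once it is in place, the remaining verifications are structural.
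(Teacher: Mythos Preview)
Your argument is correct and reaches the same conclusion as the paper, but by a different construction of the connection. The paper does not build $\nabla$ by orthogonal projection; instead it observes directly that the connection on $E_N$ induces a connection on the sphere bundle $Z$ of admissible complex structures on $E$ (via the identification $Z_M=Z_N|_M$), then uses the local splitting $E^{\mathbb{C}}=H\otimes F$ with $Z=PH$: the inherited connection on $Z$ is a connection $\nabla^{H}$ on $H$, and tensoring with \emph{any} compatible connection $\nabla^{F}$ on $F$ yields the desired compatible connection on $E$. In both approaches the essential point is the same---what matters is only the induced connection on $A$ (equivalently on $Z$), and this is forced to be $\nabla^{A_N}|_M$---but the paper isolates that fact structurally, while you recover it as a by-product of the projection formula. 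Your route has the advantage of being manifestly global and of not invoking the $H\otimes F$ decomposition; the paper's route is shorter and makes transparent the large freedom in the choice of $\nabla$ (any $\nabla^{F}$ works), which is useful conceptually. One small remark: Lemma~\ref{lem:cr_subspace} is stated for CR (complex) linear maps, so when you invoke it for the CR quaternionic map $\widetilde{\iota}$ you are implicitly applying it with respect to a single admissible $J$, which suffices for real injectivity; this is harmless but worth making explicit.
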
 
\begin{proof} 
Let $Z$ be the bundle of admissible linear complex structures on $E$. Then the connection on the corresponding quaternionic 
vector bundle over $N$ induces a connection on $Z$\,.\\ 
\indent 
Locally, we can write $E^{\C\!}=H\otimes F$, with $H$ and $F$ complex vector bundles of ranks $2$ and $2k$\,, 
and structural groups ${\rm Sp}(1)$ and ${\rm GL}(k,\Hq)$\,, respectively, where $\rank E=4k$\,.\\ 
\indent 
Then $Z=PH$ and the connection on $Z$ corresponds to a connection $\nabla^{H}$ on $H$. Thus, if $\nabla^F$ is any connection 
on $F$ (compatible with its structural group) then $\nabla^H\otimes\nabla^F$ is the complexification of a connection on $E$, which is as required. 
\end{proof} 

\indent 
More can be said about the CR quaternionic submanifolds of a quaternionic manifold. 

\begin{cor} \label{cor:cr_q_submfd_real_analytic}  
Let $(M,E)$ be a real-analytic almost CR quaternionic submanifold of a quaternionic manifold $N$. Then the following 
assertions hold:\\ 
\indent 
{\rm (i)} The inclusion from $M$ into $N$ extends to a germ unique quaternionic immersion from a quaternionic manifold $P$ to $N$, 
with $\dim P=\rank E$.\\ 
\indent 
{\rm (ii)} Any quaternionic connection on $N$ induces, by restriction, a connection on $E$ with respect to which 
$M$ is a CR quaternionic submanifold of $N$.
\end{cor}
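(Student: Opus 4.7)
The plan is twofold: for (i) I combine Lemma \ref{lem:cr_q_submfd} with Theorem \ref{thm:cr_q_maps} to extend the inclusion $\phi:M\hookrightarrow N$ to a quaternionic map out of (a piece of) the heaven space $\widetilde M$, and then promote that map to an immersion via Lemma \ref{lem:cr_subspace}; for (ii) I plan to exploit the totally geodesic nature of quaternionic submanifolds of a quaternionic manifold in order to restrict an arbitrary quaternionic connection on $N$, through the resulting $P$, down to $E$.

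In detail, for (i) I will first apply Lemma \ref{lem:cr_q_submfd} (choosing the auxiliary connections $\nabla^H$, $\nabla^F$ in its proof to be real-analytic) to endow $E$ with a real-analytic compatible connection making $M$ a real-analytic CR quaternionic submanifold of $N$; in particular, $\phi$ becomes a real-analytic CR quaternionic map. Since the quaternionic manifold $N$ is its own heaven space, Theorem \ref{thm:cr_q_maps} then yields a germ unique quaternionic extension $\widetilde\phi:P\to N$ on some open neighbourhood $P$ of $M$ in $\widetilde M$; by the construction of $\widetilde M$ one has $\dim P=\dim\widetilde M=\rank E$. It remains to check that $\widetilde\phi$ is an immersion, possibly after shrinking $P$. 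At each $x\in M$ the heaven space identification $T\widetilde M|_M=E$ forces $\dif\!\widetilde\phi_x:E_x\to T_{\phi(x)}N$ to be a quaternionic linear extension of the CR quaternionic linear map $\dif\!\phi_x:T_xM\to T_{\phi(x)}N$; by the uniqueness clause in the definition of CR quaternionic linear maps it must coincide with $\widetilde{\dif\!\phi_x}$, and Lemma \ref{lem:cr_subspace} then guarantees that this extension is injective because $\dif\!\phi_x$ is. Openness of the immersion condition finally allows me to shrink $P$ to an open set throughout which $\widetilde\phi$ is an immersion.

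For (ii), given any quaternionic connection $\nabla$ on $N$, I will invoke the quaternionic immersion $P\hookrightarrow N$ just produced. By the classical fact cited in the Introduction---that quaternionic submanifolds of a quaternionic manifold are totally geodesic with respect to any quaternionic connection---$\nabla$ preserves $TP$ and restricts to a torsion-free compatible, hence quaternionic, connection on $P$. A further restriction to $M$ then provides a connection on $E=TP|_M$ which is manifestly the restriction of $\nabla|_M$ to the subbundle $E\subseteq TN|_M$. With this connection in place, the twistor space of $M$ becomes the restriction to $M$ of the genuine complex twistor space of $P$, so its induced almost CR structure is integrable; hence $M$ is a CR quaternionic submanifold of $N$, the inclusion factoring as $M\hookrightarrow P\hookrightarrow N$. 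The main obstacle I foresee is the immersion step in (i), namely matching $\dif\!\widetilde\phi$ on $M$ with the quaternionic linear extension $\widetilde{\dif\!\phi}$; this is precisely the place where the structural content of the heaven space---in particular the identification $T\widetilde M|_M=E$---is actually used.
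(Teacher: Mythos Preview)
Your proposal is correct and follows essentially the same route as the paper: for (i) you combine Lemma~\ref{lem:cr_q_submfd}, Theorem~\ref{thm:cr_q_maps}, and Lemma~\ref{lem:cr_subspace} exactly as the paper does (in fact you spell out the details---real-analyticity of the auxiliary connection, the identification $T\widetilde M|_M=E$, and the openness argument for the immersion---that the paper leaves implicit), and for (ii) you derive the claim from (i) together with the total geodesicity of quaternionic submanifolds, again matching the paper.
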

\begin{proof} 
Assertion (i) is an immediate consequence of Theorem \ref{thm:cr_q_maps}\,, and Lemmas \ref{lem:cr_subspace} and \ref{lem:cr_q_submfd}\,. 
Assertion (ii) is an immediate consequence of (i) and the fact that the quaternionic submanifolds are totally geodesic with respect to any quaternionic connection 
on the ambient space \cite{AleMar-Report93} (see \cite{IMOP}\,). 
\end{proof}

\indent 
We end with the following result (cf.\ \cite[Proposizione 12.1]{Mar-Rend90}\,). 

\begin{cor} \label{cor:cr_q_submfd_codim}
Let $\phi:M\to N$ be an immersion from a manifold of dimension $4k-1$ to a quaternionic manifold $N$ of dimension $4n$\,.
Suppose that, for any $x\in M$, the quaternionic vector subspace of $T_{\phi(x)}N$ spanned by $\dif\!\phi(T_xM)$ has dimension $4k$\,.\\
\indent
Then $\phi$ extends to a germ unique quaternionic immersion from a quaternionic manifold, of dimension $4k$\,, to $N$;  
in particular, the pull-back of any quaternionic connection on $N$ preserves the quaternionic vector bundle generated by $TM$.
\end{cor}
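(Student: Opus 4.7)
The plan is to exhibit $\phi$ as an almost CR quaternionic immersion and then invoke Corollary~\ref{cor:cr_q_submfd_real_analytic}\,. First, for each $x\in M$, let $E_x\subseteq T_{\phi(x)}N$ denote the quaternionic vector subspace spanned by $\dif\!\phi_x(T_xM)$\,; by hypothesis all these subspaces have the same real dimension $4k$\,, so, assembling them, we obtain a quaternionic vector subbundle $E\to M$ of $\phi^*TN$ (the constant rank is exactly what is needed in order to produce local trivializations out of a local frame of $\dif\!\phi(TM)$ together with a local admissible frame of the quaternionic bundle of $N$). The map $\iota:=\dif\!\phi:TM\to E$ is then an injective vector bundle morphism whose image has real codimension~$1$.

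Next, I would verify that $(M,E,\iota)$ is an almost CR quaternionic submanifold of $N$. The only nontrivial condition to check is that $\iota(T_xM)+J\bigl(\iota(T_xM)\bigr)=E_x$ for every admissible linear complex structure $J$ on $E_x$\,. But any $J$-invariant real subspace of $(E_x,J)$ has even real dimension, while $\dim_{\R}\iota(T_xM)=4k-1$ is odd; hence $\iota(T_xM)$ is not $J$-invariant, so $J\bigl(\iota(T_xM)\bigr)\not\subseteq\iota(T_xM)$\,, which forces the sum to strictly contain $\iota(T_xM)$ and therefore to exhaust the $4k$-dimensional space $E_x$\,.

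Now I would apply Corollary~\ref{cor:cr_q_submfd_real_analytic}\,(i) (implicitly in the real-analytic setting, as required by the ``germ unique'' conclusion and already by Theorem~\ref{thm:cr_q_maps}\,) to obtain a germ unique quaternionic immersion from a quaternionic manifold $P$\,, of dimension $\rank E=4k$\,, to $N$, extending $\phi$\,. Moreover, assertion (ii) of the same corollary, together with the fact that quaternionic submanifolds are totally geodesic with respect to any quaternionic connection on the ambient space, yields that the pull-back to $M$ of any quaternionic connection on $N$ preserves the quaternionic vector subbundle $E=TP|_M$ generated by $TM$.

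The main obstacle is the second step: verifying the linear CR quaternionic condition at every point. Once one notices the elementary parity argument that makes the codimension-one hypothesis effective, the remainder of the proof is a direct appeal to the previously established machinery.
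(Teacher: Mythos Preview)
Your parity argument and the identification of $(M,E,\iota)$ as an almost CR quaternionic structure are correct, and \emph{if} one assumes that $M$ and $\phi$ are real-analytic then your reduction to Corollary~\ref{cor:cr_q_submfd_real_analytic} works verbatim. The difficulty is that the corollary, as stated, carries no real-analyticity hypothesis on $M$ or on $\phi$; removing that hypothesis is precisely the content that distinguishes it from Corollary~\ref{cor:cr_q_submfd_real_analytic}. Your parenthetical ``implicitly in the real-analytic setting'' therefore inserts an assumption that collapses the statement to one already proved, and leaves the genuinely new case untreated.

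The paper's proof avoids real-analyticity by working directly on the twistor side and exploiting the special feature of the codimension-one situation. For $\dim M=4k-1$ and $\rank E=4k$ the CR structure on the twistor space $Z_M$ has CR codimension one inside an ambient of complex dimension $2k+1$; in this regime one can show (using \cite[Theorem 14.1.1]{Bog} together with \cite[Proposition III.2.3]{KoNo}, in the spirit of LeBrun's treatment of the $k=1$ conformal case \cite{LeB-CR_twistors}) that CR functions on $Z_M$ extend locally and uniquely to holomorphic functions on a complex manifold $Z^U$ of dimension $2k+1$. This is exactly the input needed to run the Andreotti--Fredricks gluing and obtain a global complex manifold $Z$ containing $Z_M$ as a CR submanifold, without ever assuming the original CR structure is real-analytic. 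With $Z$ in hand, the argument of Theorem~\ref{thm:cr_q_maps} (and hence the conclusion of Corollary~\ref{cor:cr_q_submfd_real_analytic}) goes through unchanged. In short: the missing idea in your proposal is the smooth-category embeddability of $Z_M$, which replaces the appeal to the heaven space and is where the hypothesis $\dim M=4k-1$ is actually used beyond the elementary linear-algebra step.
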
 
\begin{proof} 
Let $Z_M$ and $Z_N$ be the twistor spaces of $M$ and $N$, respectively. By \cite[Proposition 1.10]{AndFre}\,, in an open neighbourhood $U$ of each point 
of $Z_M$ there exists a complex manifold $Z^U$, with $\dim_{\C\!}(Z^U)=2k+1$, containing $U$ as a CR submanifold. Moreover, 
we may suppose that any CR function on $U$ can be locally (and uniquely) extended to a holomorphic function on $Z^U$ (use \cite[Proposition III.2.3]{KoNo} 
to show that \cite[Theorem 14.1.1]{Bog} can be applied; cf.\ \cite{LeB-CR_twistors}\,). But this is sufficient for the proof of the global embeddability 
theorem \cite{AndFre} to work, thus, showing that there exists a complex manifold $Z$, with $\dim_{\C\!}Z=2k+1$, which contain $Z_M$ as an embedded CR submanifold. 
Consequently, also Theorem \ref{thm:cr_q_maps} extends to this setting. 
\end{proof}

\end{document}